\DeclareMathAlphabet{\mathpzc}{OT1}{pzc}{m}{it}
\newcommand{\pa}{\parallel}
\newtheorem{thm}{Theorem}[section]
\newtheorem{defn}[thm]{Definition}
\newtheorem{cor}[thm]{Corollary}
\begin{document}

\title{A quadrilateral half-turn theorem}

\author{\renewcommand{\thefootnote}{\arabic{footnote}}
Igor Minevich and Patrick Morton}
\footnotetext[1]{2010 Mathematics Subject Classification: 51A05}
\date{}
\maketitle

\abstract{If $ABC$ is a given triangle in the plane, $P$ is any point not on the extended sides of $ABC$ or its anticomplementary triangle, $Q$ is the complement of the isotomic conjugate of $P$ with respect to $ABC$, $DEF$ is the cevian triangle of $P$, and $D_0$ and $A_0$ are the midpoints of segments $BC$ and $EF$, respectively, a synthetic proof is given for the fact that the complete quadrilateral defined by the lines $AP, AQ, D_0Q, D_0A_0$ is perspective by a Euclidean half-turn to the similarly defined complete quadrilateral for the isotomic conjugate $P'$ of $P$ . This fact is used to define and prove the existence of a generalized circumcenter and generalized orthocenter for any such point $P$.}

\section{Introduction.}

The purpose of this note is to give a synthetic proof of the following surprising theorem.  We let $ABC$ be an ordinary triangle in the extended Euclidean plane, and we let $P$ be a point which does not lie on the sides of either $ABC$ or its anticomplementary triangle.  Furthermore, if $K$ denotes the complement map and $P'$ denotes the isotomic conjugate of $P$ with respect to $ABC$, then $Q=K(P')$ denotes the {\it isotomcomplement} of the point $P$ (Grinberg's terminology \cite{gr1}).  Furthermore, let $D_0, E_0, F_0$ be the midpoints of the sides of $ABC$ opposite $A, B$, and $C$, respectively.  \medskip

We denote by $T_P$ the unique affine map taking $ABC$ to the cevian triangle $DEF$ of $P$, and we set $A_0B_0C_0=T_P(D_0E_0F_0)$, the image of the medial triangle of $ABC$ under the map $T_P$.  Then $A_0, B_0, C_0$ are just the midpoints of segments $EF, DF$, and $DE$, respectively.  Also, $D_3E_3F_3$ is the cevian triangle of $P'$, so that $D_3$ is the reflection of the point $D$ across the midpoint $D_0$ of $BC$, etc.; $T_{P'}$ is the affine mapping for which $T_{P'}(ABC)=D_3E_3F_3$; and $A_0'B_0'C_0'=T_{P'}(D_0E_0F_0)$. (We are choosing notation to be consistent with the notation in \cite{mimo}, where the cevian triangles of $P$ and $Q$ are $DEF=D_1E_1F_1$ and $D_2E_2F_2$.) The theorem we wish to prove can be stated as follows.  \medskip

\begin{thm}[{Quadrilateral Half-turn Theorem}] If $Q'=K(P)$ is the isotomcomplement of $P'$, the complete quadrilaterals
$$\Lambda=(AP)(AQ)(D_0Q)(D_0A_0) \ \ and \ \ \Lambda'=(D_0Q')(D_0A_0')(AP')(AQ')$$
are perspective by a Euclidean half-turn about the point $N_1=$ midpoint of $AD_0 =$ midpoint of $E_0F_0$.  In particular, corresponding sides in these quadrilaterals are parallel.
\end{thm}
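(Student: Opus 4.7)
My plan is to use the half-turn $\sigma$ about $N_1$ (so $\sigma(X)=2N_1-X$) as a bridge between $\Lambda$ and $\Lambda'$. Because $N_1$ is the midpoint of $AD_0$, $\sigma$ interchanges $A$ and $D_0$; and because a point reflection sends every line to a parallel line, any line through $A$ is sent to a parallel line through $D_0$, and conversely. To show that $\sigma(\Lambda)=\Lambda'$ it therefore suffices to verify the four parallelisms
\[
AP \parallel D_0Q',\qquad AQ \parallel D_0A_0',\qquad D_0Q \parallel AP',\qquad D_0A_0 \parallel AQ',
\]
each of which forces $\sigma$ to match one specified line of $\Lambda$ to one specified line of $\Lambda'$. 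Once all four hold, $\sigma$ bijects the four lines and hence the six pairwise intersections, so $\Lambda$ and $\Lambda'$ are perspective by the half-turn $\sigma$ about $N_1$, with corresponding sides parallel because half-turns preserve direction.

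The first and third parallelisms will be essentially free. The complement map $K$ is a central dilatation (ratio $-1/2$ about the centroid $G$), hence carries every line to a parallel line, and by definition $K(A)=D_0$, $K(P)=Q'$, $K(P')=Q$. Thus $K(AP)=D_0Q'$ and $K(AP')=D_0Q$, giving both parallelisms at once. The second and fourth will rest on two subsidiary observations. First, since $E_3,F_3$ are the reflections of $E,F$ across $E_0,F_0$,
\[
A_0+A_0' = \tfrac12(E+F)+\tfrac12(E_3+F_3) = E_0+F_0 = 2N_1,
\]
so the midpoint of $A_0A_0'$ is $N_1$ and hence $\sigma(A_0)=A_0'$. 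Second, the $A$-median of $\triangle AEF$ passes through $Q$, i.e.\ the lines $AQ$ and $AA_0$ coincide; by the same argument with $P$ and $P'$ interchanged, $AQ'=AA_0'$. Combining these, $\sigma(AA_0)=D_0A_0'$ yields $AQ\parallel D_0A_0'$, and swapping $P\leftrightarrow P'$ yields $AQ'\parallel D_0A_0$.

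I expect the real work to lie in the collinearity of $A$, $A_0$, $Q$. Since $A_0B_0C_0$ is the medial triangle of $DEF$ (the cevian triangle in $DEF$ of its centroid), the cevian-nest theorem applied to $A_0B_0C_0\subset DEF\subset ABC$ shows that the three lines $AA_0$, $BB_0$, $CC_0$ concur. What remains is to identify their common point with the isotomcomplement $Q=K(P')$: in barycentrics with $P=(u{:}v{:}w)$ both $A_0$ and $Q$ have second and third coordinates proportional to $v(u+w)$ and $w(u+v)$, making the collinearity immediate, and a purely synthetic variant is available via Desargues---showing that $ABC$ and $A_0B_0C_0$ are perspective from the trilinear polar of $(u^2{:}v^2{:}w^2)$ and then tracing the perspector explicitly. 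Once that step is complete, the four parallelisms are in hand and the theorem follows.
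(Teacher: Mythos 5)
Your proposal is correct in outline, and it takes a genuinely different route from the paper's proof. You work at the level of the four \emph{lines}: since the half-turn $\sigma$ about $N_1$ swaps $A$ and $D_0$ and sends every line to a parallel line, the four parallelisms $AP \parallel D_0Q'$, $D_0Q \parallel AP'$ (both immediate from $K$ being a dilatation with $K(A)=D_0$, $K(P)=Q'$, $K(P')=Q$), together with $\sigma(A_0)=A_0'$ (your vector identity $A_0+A_0'=E_0+F_0=2N_1$ is the paper's parallelogram $A_0E_0A_0'F_0$ in disguise) and the Grinberg--Yiu collinearities $Q\in AA_0$, $Q'\in AA_0'$, pin down $\sigma(\ell)$ for each side $\ell$ of $\Lambda$; the six vertices then correspond automatically as pairwise intersections. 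This is leaner than the paper's argument, which instead identifies all six vertices of $\Lambda$ explicitly ($A,R,M_d,Q,A_0,D_0$), proves the collinearity of $D_0$, $R$, $A_0$ by a congruent-triangle argument, and invokes Pappus's theorem on the hexagon $AM_d'RD_0M_dQ'$ to show $N_1$ bisects $Q'R$; your route needs neither Pappus nor Theorem 2.1. What the paper's heavier vertex-level proof buys is exactly the by-products used afterwards: the parallelogram $AQ'D_0R$ and the identification $R=AP\cdot D_0A_0$ feed directly into Corollary 2.4 and the proof of Theorem 3.2, so if you adopt the line-level argument you would still want to extract those vertex statements as corollaries (they do follow, since $\sigma$ matches vertices once it matches lines). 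Two smaller points: the paper treats the case where $P$ or $P'$ is infinite separately, and you should at least remark that your line-level parallelisms survive there (they do, and arguably more gracefully than the vertex argument); and your fallback proof of $Q\in AA_0$ via barycentric coordinates is valid but not synthetic, whereas the paper simply cites the synthetic proof in its reference [6] --- citing that result, as the paper does, is the cleaner choice given that the stated aim is a synthetic proof.
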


This shows that the symmetry between $P$ and $P'$, initially determined by different reflections across the midpoints of the sides of $ABC$, is also determined by a Euclidean isometry of the whole plane.  However, this isometry permutes the sides of $\Lambda$ and $\Lambda'$, so that side $AP$ in $\Lambda$ does not correspond to $AP'$ in $\Lambda'$, but to $D_0Q'$, and so forth.  There are similar statements corresponding to Theorem 1.1 for the quadrilaterals $(BP)(BQ)(E_0Q)(E_0B_0)$ and $(CP)(CQ)(F_0Q)(F_0C_0)$.

\begin{figure}[htbp]
\[\includegraphics[width=4in]{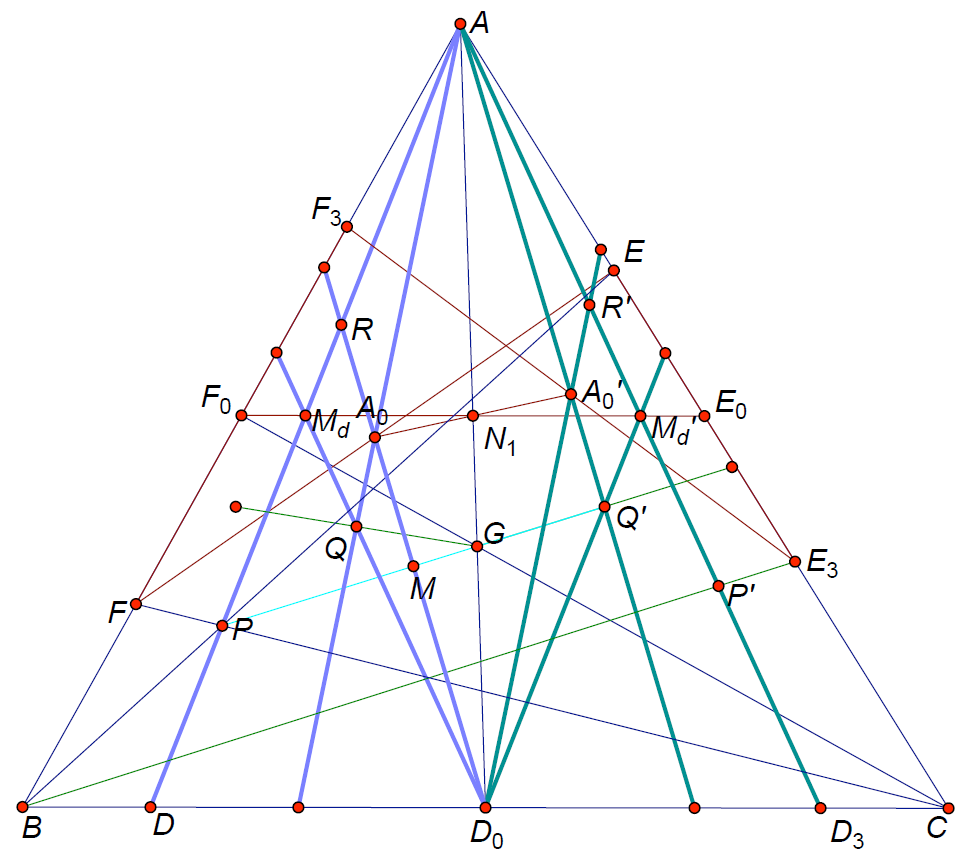}\]
\caption{Quadrilateral Half-turn Theorem}
\label{fig:1.3}
\end{figure}

\section{Preliminaries and proof.}

We require two results, for which synthetic proofs can be found in \cite{mimo}.

\begin{thm}[{Theorem 3 in \cite{gr1}}] Let $ABC$ be a triangle and $D, E, F$ the traces of point $P$ on the sides opposite $A, B$, and $C$. Let $D_0, E_0, F_0$ be the midpoints of the sides opposite $A, B, C$, and let $M_d, M_e, M_f$ be the midpoints of $AD, BE, CF$. Then $D_0M_d, E_0M_e, F_0M_f$ meet at the isotomcomplement $Q = K \circ \iota(P)$ of $P$.  ($\iota$ is the isotomic map.)
\label{thm:1.1}
\end{thm}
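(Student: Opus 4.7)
The plan is to reduce Grinberg's theorem to the triviality that the three cevians of $P'=\iota(P)$ concur at $P'$, by transporting that configuration to the one in the statement through the complement map $K$. The key observation is that $K$ is the homothety $h(G,-\frac{1}{2})$ centered at the centroid $G$ of $ABC$, so it sends $A, B, C$ to $D_0, E_0, F_0$ respectively and, crucially, it sends each isotomic trace of $P'$ to the corresponding midpoint of a $P$-cevian.

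First I would record the affine form $K(X)=\frac{1}{2}(A+B+C-X)$ of $K$ and use it to verify the identity
\[
K(D_3)=\frac{A+B+C-D_3}{2}=\frac{A+D}{2}=M_d,
\]
which follows at once from $D+D_3=B+C$ (since $D_3$ is the reflection of $D$ across $D_0=\frac{1}{2}(B+C)$). Cyclically, $K(E_3)=M_e$ and $K(F_3)=M_f$.

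With these identifications in hand, the concurrence claim is immediate. By the very definition of the isotomic conjugate, the three lines $AD_3$, $BE_3$, $CF_3$ all pass through $P'$. Applying the affinity $K$ to each of them turns them into lines through $K(P')=K\circ\iota(P)=Q$, and by the identities $K(A)=D_0$, $K(D_3)=M_d$ (and cyclically), the three image lines are
\[
K(AD_3)=D_0M_d,\qquad K(BE_3)=E_0M_e,\qquad K(CF_3)=F_0M_f.
\]
So these three meet at $Q$, which is the statement of the theorem.

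The main and essentially only nontrivial step is establishing the identity $K(D_3)=M_d$. Once that one-line computation is available, the rest is pure transport of structure through a homothety, and no Ceva-type argument or barycentric coordinate bookkeeping is required; the synthetic content of the theorem is precisely the observation that $K$ pairs the isotomic traces of $P'$ with the midpoints of the cevians of $P$.
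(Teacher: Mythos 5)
Your argument is correct. The identity $K(D_3)=M_d$ follows exactly as you say from $D+D_3=B+C$, and since $K$ is the homothety $h(G,-\tfrac12)$ it carries the concurrent cevians $AD_3, BE_3, CF_3$ of $P'$ to the lines $D_0M_d, E_0M_e, F_0M_f$, which therefore concur at $K(P')=Q$. Note that the paper does not actually prove this theorem in the text --- it defers to a synthetic proof in \cite{mimo} --- but your key identity is precisely what the paper records as Corollary 2.2 ($K(D_3)=M_d$ and $D_0Q\parallel AP'$, the latter being the automatic parallelism of a line and its image under a homothety with negative ratio), so your route is the one the paper's own organization anticipates; you have merely inverted the logical order, deriving the theorem from the identity rather than stating the identity as a corollary. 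Two small points worth flagging: first, your proof silently assumes the existence of the isotomic conjugate, i.e., that $AD_3$, $BE_3$, $CF_3$ do concur --- this is where the Ceva-type bookkeeping you claim to avoid actually resides, though it is entirely standard and is assumed throughout the paper as well. Second, the paper works in the extended plane and allows $P'$ (hence $Q$) to be an infinite point; your argument survives this case because a homothety fixes every point at infinity, so the three image lines still pass through $K(P')=P'=Q$, but since you phrased $K$ via the affine formula $K(X)=\tfrac12(A+B+C-X)$ you should say a word about how the transport of the pencil through $P'$ is interpreted when $P'$ is infinite (the three cevians are then parallel, and their images are parallel in the same direction).
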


\begin{cor}
$D_0M_d = D_0Q$ is parallel to $AP'$ and $K(D_3) = M_d$.
\label{cor:1.1}
\end{cor}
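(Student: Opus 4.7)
The plan is to reduce both claims to the single identity $K(D_3)=M_d$, from which the parallelism is automatic and the collinearity with $Q$ is supplied by Theorem \ref{thm:1.1}. First I would record the standard description of $K$: since $K$ is the affine map with $K(A)=D_0,\ K(B)=E_0,\ K(C)=F_0$, it is the homothety centred at the centroid $G$ with ratio $-\tfrac{1}{2}$, and admits the vector expression $K(X)=\tfrac{1}{2}(A+B+C-X)$.

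The central step is to verify $K(D_3)=M_d$. Because $D_3$ is the reflection of $D$ across the midpoint $D_0=\tfrac{1}{2}(B+C)$ of $BC$, one has $D_3+D=B+C$, so
\[
K(D_3)=\tfrac{1}{2}\bigl(A+B+C-D_3\bigr)=\tfrac{1}{2}(A+D)=M_d.
\]
A more synthetic reading of the same identity is that $K$ carries the side $BC$ onto the midsegment $E_0F_0$ (which passes through $M_d$, since $M_d$ is the midpoint of $AD$ with $D\in BC$) and sends $D_0$ to the midpoint of $E_0F_0$; the reflection symmetry of $BC$ about $D_0$ is thereby carried to the reflection symmetry of $E_0F_0$ about its midpoint, which identifies the image of $D_3$ with $M_d$.

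With $K(D_3)=M_d$ in hand the two assertions of the corollary follow at once. Since $D_3$ is the trace of $P'$ on $BC$, the line $AP'$ coincides with $AD_3$; applying the homothety $K$ sends this line to the parallel line through $K(A)=D_0$ and $K(D_3)=M_d$, giving $D_0M_d\parallel AP'$. Finally, Theorem \ref{thm:1.1} places $Q$ on the line $D_0M_d$, so $D_0M_d$ and $D_0Q$ are the same line, and $K(D_3)=M_d$ is exactly the second asserted equality. The only real obstacle is establishing $K(D_3)=M_d$; either the one-line vector computation or the midsegment symmetry argument above takes care of it, and nothing else in the corollary requires more than a direct appeal to the preceding theorem.
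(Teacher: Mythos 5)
Your proof is correct, and it is essentially the argument the paper intends: the paper states this corollary without proof (deferring to Altshiller-Court), but the deduction it relies on later is exactly yours, namely that $K$ is a dilatation with $K(A)=D_0$ and $K(D_3)=M_d$, so $K$ sends the line $AD_3=AP'$ to the parallel line $D_0M_d$, which contains $Q$ by Theorem~\ref{thm:1.1}. Your one-line verification $K(D_3)=\tfrac12(A+B+C-D_3)=\tfrac12(A+D)=M_d$ from $D+D_3=B+C$ is sound, and the midsegment reading of it matches the paper's synthetic spirit.
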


See also Altshiller-Court \cite{ac} (p.165, Supp. Ex. 10).

\begin{thm}[{Grinberg-Yiu \cite{gr1}, \cite{y2}}] With $D, E, F$ as before, let $A_0$, $B_0$, $C_0$ be the midpoints of $EF, DF$, and $DE$, respectively. Then the lines $AA_0$, $BB_0$, $CC_0$ meet at the isotomcomplement $Q$ of $P$.
\label{thm:1.3}
\end{thm}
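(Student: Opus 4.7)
The plan is to establish the concurrency of $AA_0$, $BB_0$, $CC_0$ via Ceva's theorem in triangle $ABC$, and then identify the common point with the isotomcomplement $Q$ using Theorem 2.2.

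For the concurrency step, let $D^* = AA_0\cap BC$. Since $A_0$ is the midpoint of $EF$ and $F$ lies on side $AB$, the area identity $[ABA_0]=\tfrac12[ABE]$ holds (the term $[ABF]$ vanishes because $F\in AB$); similarly $[ACA_0]=\tfrac12[ACF]$ because $E\in CA$. Consequently
\[
\frac{BD^*}{D^*C} \;=\; \frac{[ABA_0]}{[ACA_0]} \;=\; \frac{AE/AC}{AF/AB} \;=\; \frac{AE\cdot AB}{AF\cdot AC}.
\]
Writing the two cyclic analogues for $E^*=BB_0\cap CA$ and $F^*=CC_0\cap AB$, and multiplying the three ratios, the side-length factors $AB\cdot BC\cdot CA$ and $AC\cdot BA\cdot CB$ cancel, leaving
\[
\frac{BD^*}{D^*C}\cdot\frac{CE^*}{E^*A}\cdot\frac{AF^*}{F^*B}
\;=\;\frac{EA\cdot FB\cdot DC}{AF\cdot BD\cdot CE},
\]
which equals $1$ by Ceva's relation for the cevian point $P$. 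Ceva's theorem then yields concurrency of $AA_0$, $BB_0$, $CC_0$ at some point $Q^*$.

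For the identification $Q^* = Q$, I would invoke Theorem 2.2, which already places $Q$ on each of $D_0M_d$, $E_0M_e$, $F_0M_f$. It suffices to show $Q^*$ lies on one of these lines, say $D_0M_d$. One route is to exploit Cor 2.3 ($D_0M_d \parallel AP'$) and run a parallel-line/midpoint argument in the configuration of $A$, $P'$, $D_3$, $D_0$, $M_d$, to locate $AA_0\cap D_0M_d$ and recognize it as $Q^*$. Alternatively, a direct synthetic computation of the foot on $BC$ of the cevian $AQ$, using $Q=K(\iota(P))$ and the action of the complement map $K$ (the homothety of ratio $-\tfrac12$ about the centroid), yields $BD_Q/D_QC$ matching the formula above, whence $D_Q=D^*$ and so $Q^*=Q$.

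The main obstacle is the identification step. Establishing the concurrency via Ceva is a routine area-chase once the midpoint condition on $A_0, B_0, C_0$ is translated into ratios on the sides of $ABC$; but pinning down the concurrency point intrinsically as $K\circ\iota(P)$ without coordinates requires an extra geometric step. The cleanest synthetic bridge goes through Theorem 2.2 and the parallelism in Cor 2.3, which together tie the $AA_0$-pencil to the already-identified $D_0M_d$-pencil through $Q$.
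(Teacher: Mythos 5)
The paper does not actually prove this statement itself; it imports it from \cite{mimo} (``synthetic proofs can be found in \cite{mimo}''), so there is no in-paper argument to measure yours against. Judged on its own, your first half is correct and complete: since $A_0=\tfrac12(E+F)$ and signed area is linear in each vertex, $[ABA_0]=\tfrac12([ABE]+[ABF])=\tfrac12[ABE]$, the ratio $\tfrac{BD^*}{D^*C}=\tfrac{AE\cdot AB}{AF\cdot AC}$ follows, and the product of the three cyclic ratios telescopes to the Ceva product for the cevian triangle $DEF$, giving concurrency at some point $Q^*$. (For $P$ outside the triangle this should be run with signed ratios and signed areas, but that is routine.)

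The identification $Q^*=Q=K(\iota(P))$ is a genuine gap, and it is the substantive half of the theorem --- the half the paper actually uses (e.g.\ $A_0=AQ\cdot D_0A_0$ in the proof of Theorem 1.1 requires $Q$ to lie on $AA_0$, not merely that the three lines concur somewhere). Neither of your two routes is carried out. On route (a): showing $Q^*\in D_0M_d$ would indeed suffice, since by symmetry $Q^*$ would then also lie on $E_0M_e$, and those two lines meet only at $Q$ (Theorem 2.1); but the ``parallel-line/midpoint argument'' that is supposed to place $AA_0\cap D_0M_d$ at $Q^*$ is precisely the missing content --- Corollary 2.2 ties $D_0M_d$ to $A$, $D$, $D_3$, while nothing you have written connects $A_0$, a midpoint on the segment $EF$, to that configuration. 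On route (b): the ratio does check out (for $Q=(x(y+z):y(x+z):z(x+y))$ the trace on $BC$ gives $BD_Q/D_QC=z(x+y)/\bigl(y(x+z)\bigr)$, which matches $\tfrac{AE\cdot AB}{AF\cdot AC}$), but the complement map $K$ is a homothety about the centroid and does not act on cevian traces in any simple way, so ``using the action of $K$'' to locate the foot of $AQ$ on $BC$ synthetically is not a routine step; it is essentially equivalent in difficulty to the theorem itself. Either route can be completed (route (b) is immediate in barycentric coordinates, at the cost of abandoning the synthetic framing), but as written your proof stops exactly where the statement begins to say something nontrivial.
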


\begin{proof}[Proof of Theorem 1.1.]
(See Figure 1.) Let $R$ and $R'$ denote the midpoints of segments $AP$ and $AP'$, and $M_d$ and $M_d'$ the midpoints of segments $AD$ and $AD_3$, where $D_3=AP' \cdot BC$.  We first check that the vertices of the complete quadrilateral (see \cite{cox})
$$\Lambda = (AP)(AQ)(D_0Q)(D_0A_0)$$
are $A, R, M_d, Q, A_0$, and $D_0$.  It is clear that $A, Q, D_0$ are vertices.  Further, $M_d=AP \cdot D_0Q$ by Theorem 2.1 and $A_0=AQ \cdot D_0A_0$ by Theorem 2.3. \smallskip

We now show that $D_0, A_0$, and $R$ are collinear, from which we obtain $R=AP \cdot D_0A_0$.  Since $A_0E_0A_0'F_0$ joins the midpoints of the sides of the quadrilateral $FEE_3F_3$, it is a parallelogram, so the intersection of its diagonals is the point $A_0A_0' \cdot E_0F_0=N_1$.  Hence, $N_1$ bisects $A_0A_0'$ (and with $E_0F_0$ also $AD_0$).  \smallskip

Assume that $P$ is an ordinary point.  Let $M$ be the midpoint of $PQ'$; then $K(A)=D_0, K(Q')=M$ (since $K(P)=Q'$), so $AQ'$ is parallel to $D_0M$.  Now $R$ and $M$ are midpoints of sides in triangle $AQ'P$, so $RM$ is a line through $M=K(Q')$ parallel to $AQ'$, hence we have the equality of the lines $RM=D_0M=D_0R$.  If $T=A_0'N_1.D_0R$, then triangles $AN_1A_0'$ and $D_0N_1T$ are congruent ($\angle D_0TN_1 \cong \angle AA_0'N_1$ and AAS), so $N_1$ bisects $A_0'T$ and $T=A_0$.  (Note that $N_1$, as the midpoint of $E_0F_0$, lies on $AD_0$, and $A_0$ and $A_0'$ are on opposite sides of this line; hence $N_1$ lies between $A_0$ and $A_0'$.) This shows that $D_0, R$, and $A_0$ are collinear.  By symmetry, $D_0, A_0'$, and $R'$ are collinear whenever $P'$ is ordinary.  \smallskip

If $P'=Q$ is infinite, then $P$ is ordinary (it lies on the Steiner circumellipse of $ABC$), and we may use the congruence $AN_1A_0 \cong D_0N_1A_0'$ to get that $D_0A_0' || AA_0 =AQ$, which shows that $D_0, A_0'$, and $Q$ are collinear.  Thus, the last vertex of the quadrilateral
$$\Lambda'=(D_0Q')(D_0A_0')(AP')(AQ')$$
is $R'=AP' \cdot D_0A_0' =Q$ in this case. By symmetry, we get the same conclusion for $\Lambda$ when $P$ is infinite (in which case $P'$ is ordinary).\smallskip

Now consider the hexagon $AM_d'RD_0M_dQ'$ (if $P$ is ordinary).  Alternating vertices of this hexagon are on the lines $l=AP$ and $m=D_0Q'$, by Corollary 2.2, so the theorem of Pappus \cite{cox} implies that intersections of opposite sides, namely,
$$AM_d' \cdot D_0M_d, \ \ AQ' \cdot RD_0, \ \ \textrm{and} \ M_dQ' \cdot M_d'R,$$
are collinear.  The point $AM_d' \cdot D_0M_d=AP' \cdot D_0Q$ is on the line at infinity because $K(AP')=D_0Q$. By the above argument, $AQ' \cdot RD_0$ is also on the line at infinity.  Hence, $M_dQ'$ is parallel to $M_d'R$.  Since $Q'M_d'$ is parallel to $AP=M_dR$ (Theorem 2.1 and its corollary), $M_dQ'M_d'R$ is a parallelogram and the intersection of the diagonals $Q'R \cdot M_dM_d'$ is the midpoint of $M_dM_d'=K(DD_3)$ (Corollary 2.2).  But this midpoint is $N_1=K(D_0)$, since $D_0$ is the midpoint of $DD_3$.  Hence, $N_1$ also bisects $Q'R$, and by symmetry, $QR'$, when $P'$ is ordinary. \smallskip

We have shown that $N_1$ bisects the segments between pairs of corresponding vertices in the sets
$$\{A, R, M_d, Q, A_0, D_0\} \ \ \textrm{and} \ \ \{D_0, Q', M_d', R', A_0', A\}.$$  If $P'=Q$ is infinite, we replace $R'$ by $Q$ in the second set of vertices, and we get the same conclusion since $Q$ is then fixed by the half-turn about $N_1$.  This proves the theorem.
\end{proof}

\begin{cor}
a) If $P$ and $P'$ are ordinary, the Euclidean quadrilaterals $RA_0QM_d$ and $Q'A_0'R'M_d'$ are congruent.\\
b) If $P$ is ordinary, the points $D_0, R, A_0$, and $M=K(Q')$ are collinear, where $R$ is the midpoint of segment $AP$.  The point $M=K(Q')$ is the midpoint of segment $D_0R$.\\
c) If $P'$ is infinite, then $Q, M_d, D_0, A_0'$, and $K(A_0)$ are collinear.
\end{cor}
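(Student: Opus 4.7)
The overall plan is that all three parts of the corollary follow by unpacking the half-turn correspondence proved in Theorem 1.1, supplemented by small observations about the complement map. I keep in mind that $K$ is a Euclidean homothety centered at the centroid with ratio $-\tfrac12$, so it fixes every infinite point and reverses directions along lines through its center.

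For part (a), I would first list all pairings bisected by $N_1$ that were established inside the proof of Theorem 1.1: $A \leftrightarrow D_0$ and $A_0 \leftrightarrow A_0'$ from the opening paragraph, $R \leftrightarrow Q'$ from the Pappus argument, $M_d \leftrightarrow M_d'$ from the diagonals of the parallelogram $M_dQ'M_d'R$, and $Q \leftrightarrow R'$ from the symmetric Pappus argument (swapping $P$ and $P'$). Reading these four non-trivial correspondences as the action of the half-turn, one sees that it carries the ordered Euclidean quadrilateral $RA_0QM_d$ onto $Q'A_0'R'M_d'$, and since a half-turn is a Euclidean isometry the two quadrilaterals are congruent.

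For part (b), the collinearity of $D_0, R, A_0$ was already produced in the proof of Theorem 1.1. To place $M = K(Q')$ on the same line I would reuse the configuration used there: in triangle $APQ'$ the points $R$ and $M$ are midpoints of two sides, hence $RM \parallel AQ'$; and since $K$ sends $A \mapsto D_0$ and $Q' \mapsto M$, the segment $D_0M$ is parallel to $AQ'$ with $|D_0M| = \tfrac12 |AQ'|$. This puts $D_0, R, M$ on one line and gives $|RM| = |D_0M|$. Because the homothety ratio is negative, the vectors $M - R$ and $M - D_0$ point in opposite directions, which finally identifies $M$ as the midpoint of $D_0R$.

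For part (c), $P'$ infinite forces $Q = K(P')$ infinite too. Two of the claimed collinearities are already in the proof of Theorem 1.1: $D_0, A_0', Q$ via the congruence $AN_1A_0 \cong D_0N_1A_0'$, and $D_0, M_d, Q$ via Corollary 2.2 together with Theorem 2.1. For the last point, $A_0$ lies on line $AQ$ by Theorem 2.3; since $K$ sends $A$ to $D_0$ and fixes the infinite point $Q$, the line $AQ$ is mapped to the line $D_0Q$, placing $K(A_0)$ on $D_0Q$ as required. The step that will need the most care is keeping the vertex correspondences in part (a) straight --- particularly confirming that the half-turn sends $Q$ to $R'$ rather than to $M_d'$, which follows cleanly only once both the direct $RQ'$ bisection and the Pappus-parallelogram bisection of $M_dM_d'$ are invoked.
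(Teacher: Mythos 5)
Your proposal is correct and follows essentially the same route as the paper: part (a) reads off the half-turn correspondence of the six vertices established at the end of the proof of Theorem 1.1, part (b) reuses the triangle $APQ'$ midsegment configuration from that proof, and part (c) applies the complement map $K$ to the line $AQ=AP'$ and invokes the fourth paragraph of the theorem's proof to place $A_0'$ on $QD_0$. The only cosmetic difference is in part (b), where the paper rules out $M$ being the reflection rather than the midpoint of $D_0R$ by exhibiting the parallelogram $AQ'D_0R$ and a same-side-of-line argument, whereas you compare the signs of the vectors $M-R=\tfrac12(Q'-A)$ and $M-D_0=-\tfrac12(Q'-A)$; both settle the point validly.
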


\begin{proof} Part a) is clear from the proof of the theorem.  For part b), we just have to prove the second assertion.  The theorem implies that quadrilateral $AQ'D_0R$ is a parallelogram, since $AQ'$ is parallel to $D_0A_0=D_0R$, $AR=AP$ is parallel to $D_0Q'$, and $R=AP \cdot D_0A_0$.  Thus, segment $AQ'$ is congruent to segment $D_0R$, and $D_0M=K(AQ')$ is half the length of $AQ' \cong D_0R$.  $M$ is clearly on the same side of line $D_0Q'$ as $P$ and $R$, so $M$ is the midpoint of $D_0R$.  Part c) follows by applying the complement map to the collinear points $P'=Q, D_3, A$, and $A_0$, to get that $Q, M_d, D_0$, and $K(A_0)$ are collinear, and then appealing to the argument in the fourth paragraph of the above proof, which shows that $A_0'$ is on $QD_0$.
\end{proof}

\section{An affine formula for the generalized orthocenter.}

To give an application of Theorem 1.1, we start with the following definition.

\begin{defn}
The point $O$ for which $OD_0 \pa QD, OE_0 \pa QE$, and $OF_0 \pa QF$ is called the {\bf generalized circumcenter} of the point $P$ with respect to $ABC$.  The point $H$ for which $HA \pa QD, HB \pa QE$, and $HC \pa QF$ is called the {\bf generalized orthocenter} of $P$ with respect to $ABC$.
\end{defn}

We have the following affine relationships between $Q, O$, and $H$.  We let $A_3'B_3'C_3'=T_{P'}(DEF)$ be the image of the cevian triangle $DEF$ of $P$ under the map $T_{P'}$.

\begin{thm}
\label{thm:HO}
The generalized circumcenter $O$ and generalized orthocenter $H$ exist for any point $P$ not on the extended sides of either $ABC$ or its anticomplementary triangle $K^{-1}(ABC)$, and are given by
$$O=T_{P'}^{-1}K(Q), \ \ H = K^{-1}T_{P'}^{-1}K(Q),$$
where $T_{P'}$ is the affine map taking $ABC$ to the cevian triangle $D_3E_3F_3$ of the point $P'$.
\end{thm}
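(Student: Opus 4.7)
The plan is to define $O_* := T_{P'}^{-1} K(Q)$ and verify directly that this point satisfies the three parallelism conditions defining the generalized circumcenter; existence and the stated formula for $O$ follow at once. The formula for $H$ then comes for free, via the following reduction. The complement map $K$ is the homothety centered at the centroid $G$ of $ABC$ with ratio $-1/2$, so $K^{-1}$ preserves directions and satisfies $K^{-1}(D_0) = A$, $K^{-1}(Q) = P'$. Applying $K^{-1}$ to $OD_0 \parallel QD$ yields $K^{-1}(O)\,A \parallel P'\,K^{-1}(D)$; since $P' K^{-1}(D)$ is the $K^{-1}$-image of $QD$ it is itself parallel to $QD$, giving $K^{-1}(O)\,A \parallel QD$. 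The analogous argument for $E_0$ and $F_0$ produces $K^{-1}(O)\,B \parallel QE$ and $K^{-1}(O)\,C \parallel QF$, so $H = K^{-1}(O_*) = K^{-1}T_{P'}^{-1}K(Q)$ satisfies the orthocenter conditions.

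The core task is therefore to verify $O_* D_0 \parallel QD$; by the cyclic symmetry $A \mapsto B \mapsto C \mapsto A$ of the hypotheses, the analogous conditions for $E_0$ and $F_0$ will follow automatically. To attack this, I would apply the affine map $T_{P'}$ to both lines: since $T_{P'}(O_*) = K(Q)$ and $T_{P'}(D_0) = A_0'$ (the midpoint of $E_3 F_3$), and $T_{P'}$ preserves parallelism, the desired condition is equivalent to $K(Q)\,A_0' \parallel T_{P'}(Q)\,T_{P'}(D)$. A useful accompanying identity is $K(Q) = \tfrac{1}{2}(P' + Q)$, which parallels the identity $K(Q') = \tfrac{1}{2}(P + Q')$ derived in the proof of Theorem 1.1; it exhibits $K(Q)$ as the midpoint of segment $P'Q$ and is liable to simplify the direction analysis.

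The main obstacle is to identify the direction of $T_{P'}(Q)\,T_{P'}(D)$ and show it is parallel to $K(Q)\,A_0'$. A synthetic path would use the parallelisms supplied by the Quadrilateral Half-turn Theorem (such as $D_0 Q \parallel AP'$ and $AQ \parallel D_0 A_0'$) together with Corollary 2.2 applied to $P'$ (giving $K(D) = M_d'$, the midpoint of $AD_3$, and $D_0 Q' \parallel AP$), in order to reformulate the desired parallelism into one visible directly from the configuration; possibly an auxiliary Pappus-style argument, modeled on the one in the proof of Theorem 1.1, can be applied to a suitable hexagon whose alternating vertices lie on $AP$ and the line $D_0 Q'$. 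Failing a clean synthetic route, the parallelism can be verified by a direct barycentric computation using $P = (p : q : r)$, $Q = (p(q+r) : q(p+r) : r(p+q))$, and $D_3 = (0:r:q)$, $E_3 = (r:0:p)$, $F_3 = (q:p:0)$, which reduces the claim to a polynomial identity in $p,q,r$.
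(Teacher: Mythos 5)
Your reduction steps are sound and match the paper's: defining $O_*=T_{P'}^{-1}K(Q)$, pushing the first parallelism forward by $T_{P'}$ to the equivalent claim $K(Q)A_0' \parallel T_{P'}(Q)\,T_{P'}(D)$, and deducing the $H$-formula by applying the dilatation $K^{-1}$ to the $O$-conditions. But the proof stops exactly where the real work begins: the parallelism $K(Q)A_0' \parallel T_{P'}(Q)\,T_{P'}(D)$ is never established. You correctly flag it as ``the main obstacle'' and then offer only candidate strategies (a Pappus-style hexagon, or an uncomputed barycentric identity), so as written this is an outline with a hole at its center, not a proof.

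The paper closes that hole with a specific chain of four facts, none of which appears in your proposal. First, $T_{P'}(Q)=P'$ (the fixed-point result, Theorem 3.7 of the companion paper), which identifies the image of $QD$ as the line $P'A_3'$ with $A_3'=T_{P'}(D)$. Second, the composite $\mathcal{S}'=T_{P'}T_P$ is a homothety or translation carrying $A\mapsto A_3'$ and $Q\mapsto P'$ (since $Q$ is a fixed point of $T_P$), whence $AQ\parallel A_3'P'$. Third, Corollary 2.4(b) applied to $P'$ shows $K(Q)$ lies on the line $D_0A_0'$, so that $K(Q)A_0'$ is the line $D_0A_0'$; your proposed identity $K(Q)=\tfrac12(P'+Q)$ is true but is not the fact needed here. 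Fourth, Theorem 1.1 gives $AQ\parallel D_0A_0'$. Chaining these yields $P'A_3'\parallel AQ\parallel D_0A_0'=K(Q)A_0'$. You would need to supply this chain (or genuinely carry out the barycentric computation) for the argument to be complete; you also omit the degenerate case where $P'=Q$ is infinite, which the paper treats separately.
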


 \noindent {\bf Remark.} The formula for the point $H$ can also be written as $H=T_L^{-1}(Q)$, where $L=K^{-1}(P')$ and $T_L$ is the map $T_P$ defined for $P=L$ and the anticomplementary triangle of $ABC$. \smallskip
 
\begin{proof}
We will show that the point $\tilde O=T_{P'}^{-1}K(Q)$ satisfies the definition of $O$, namely, that
$$\tilde OD_0 \pa QD, \ \ \tilde OE_0 \pa QE, \ \ \tilde OF_0 \pa QF.$$
It suffices to prove the first relation $\tilde OD_0 \pa QD$.  We have that
$$T_{P'}(\tilde OD_0)=K(Q)T_{P'}(D_0)=K(Q)A_0'$$
and
$$T_{P'}(QD)=P'A_3',$$
by \cite{mimo}, Theorem 3.7, according to which $T_{P'}(Q)=P'$, and by the definition of the point $A_3'=T_{P'}(D)$.  Thus, we just need to prove that $K(Q)A_0' \pa P'A_3'$.  We use the map $\mathcal{S}'=T_{P'}T_P$ from \cite{mimo}, Theorem 3.8, which takes $ABC$ to $A_3'B_3'C_3'$.  We have $\mathcal{S}'(Q)=T_{P'}T_P(Q)=T_{P'}(Q)=P'$, since $Q$ is a fixed point of $T_P$ (\cite{mimo}, Theorem 3.2).  Since $\mathcal{S}'$ is a homothety or translation, this gives that $AQ \pa \mathcal{S}'(AQ)=A_3'P'$.  Assuming that $P'$ is ordinary, we have $M'=K(Q)$, as in Corollary 2.4b), so by that result
$$K(Q)A_0'=M'A_0'=D_0A_0'.$$
Now Theorem 1.1 implies that $AQ \pa D_0A_0'$, and therefore $P'A_3' \pa K(Q)A_0'$.  This proves the formula for $O$.  To get the formula for $H$, just note that $K^{-1}(OD_0)=K^{-1}(O)A$, $K^{-1}(OE_0)=K^{-1}(O)B$, $K^{-1}(OF_0)=K^{-1}(O)C$ are parallel, respectively, to $QD,QE,QF$, since $K$ is a dilatation.  This shows that $K^{-1}(O)$ satisfies the definition of the point $H$. \medskip

If the point $P'=Q$ is infinite, then it is easy to see from Def. 3.1 that $O=H=Q$, and this agrees with the formulas of the theorem, since 
$$T_{P'}^{-1}K(Q)=T_{P'}^{-1}(Q)=K \circ [T_{P'}K]^{-1}(Q)=K \circ [T_{P'}K]^{-1}(P')=K(P')=Q,$$
using the fact that $T_{P'}K(P')=P'$ from \cite{mimo}, Theorem 3.7.
\end{proof}

\begin{cor}
If $P=Ge$ is the Gergonne point of $ABC$, $P'=Na$ is the Nagel point for $ABC$, and $Q=I$ is the incenter of $ABC$, the circumcenter and orthocenter of $ABC$ are given by the affine formulas
$$O=T_{P'}^{-1}K(Q), \ \ H = K^{-1}T_{P'}^{-1}K(Q).$$
\end{cor}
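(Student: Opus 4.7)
The plan is to verify the hypotheses that allow us to specialize Theorem 3.2 to the classical circumcenter and orthocenter. First I would check the identifications: if $P = Ge$ is the Gergonne point of $ABC$, then the cevian triangle $DEF$ of $P$ is the contact triangle, where $D$, $E$, $F$ are the points at which the incircle touches sides $BC$, $CA$, $AB$. The isotomic conjugate of the Gergonne point is the Nagel point $Na$, so $P' = Na$. Finally, since $Na$, $G$, $I$ are collinear with $GI = \tfrac{1}{2}\,GNa$ on the opposite side of $G$ from $Na$, and $K$ is the homothety centered at $G$ with ratio $-\tfrac{1}{2}$, we have $Q = K(P') = K(Na) = I$, the incenter.

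Next I would interpret the defining parallelism of the generalized circumcenter using the geometry of the contact triangle. Because $D$ is the tangent point of the incircle on $BC$, the radius $ID$ is perpendicular to $BC$. Thus the condition $OD_0 \pa QD = ID$ forces $OD_0 \perp BC$. Since $D_0$ is the midpoint of $BC$, this means $O$ lies on the perpendicular bisector of $BC$. The same argument applied to $E$ and $F$ places $O$ on the perpendicular bisectors of $CA$ and $AB$, so $O$ is the classical circumcenter of $ABC$. In exactly the same way, the condition $HA \pa QD \perp BC$ places $H$ on the altitude from $A$, and by symmetry on all three altitudes, so $H$ is the classical orthocenter.

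Now by Theorem 3.2, the generalized circumcenter and generalized orthocenter of $P$ are given by
\[
O = T_{P'}^{-1} K(Q), \qquad H = K^{-1} T_{P'}^{-1} K(Q),
\]
and since we have just identified $O$ and $H$ with the ordinary circumcenter and orthocenter in the Gergonne case, the corollary follows immediately. No step here is genuinely difficult; the only mild obstacle is making sure the three identifications $P' = Na$, $Q = I$, and $DEF = $ contact triangle are correctly assembled, and that one remembers that the radius of the incircle to a contact point is perpendicular to the corresponding side of $ABC$, which is the geometric fact that converts the affine parallelism condition of Definition 3.1 into the classical metric characterization of the circumcenter and orthocenter.
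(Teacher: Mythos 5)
Your proof is correct and follows exactly the intended route: the paper states this corollary without proof as an immediate specialization of Theorem 3.2, and your verification that $P'=Na$, $Q=K(Na)=I$, and that the parallelism conditions of Definition 3.1 reduce to the classical perpendicularity characterizations (via $ID\perp BC$ at the contact point) supplies precisely the details the authors leave implicit.
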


\noindent {\bf Remark.} In the corollary, $K(Q)$ is the Spieker center $X(10)$ of $ABC$, so the first formula says that $T_{Na}(O)=X(10)$.  See \cite{ki}.\medskip

We also prove the following relationship between the traces $H_a, H_b, H_c$ of $H$ on the sides $a=BC, b=CA, c=AB$ and the traces $D_2, E_2, F_2$ of $Q$ on those sides.

\begin{thm}
If the cevian triangles of $P$ and its isotomic conjugate $P'=\iota(P)$ for $ABC$ are $DEF$ and $D_3E_3F_3$, respectively, then we have the harmonic relations $\sf{H} (\it{DD_3,D_2H_a}), \sf{H} (\it{EE_3,E_2H_b}), \sf{H} (\it{FF_3,F_2H_c})$.  In other words, the point $H_a$ is the harmonic conjugate of $D_2$ with respect to the points $D, D_3$ on $BC$, with similar statements holding for the traces of $H$ and $Q$ on the other sides.
\end{thm}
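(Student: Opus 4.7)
The plan is to project the claimed harmonic relation on the line $BC$ from the vertex $A$ onto the line $QD$. The four lines $AP = AD$, $AP' = AD_3$, $AQ = AD_2$, and $AH$ form a pencil through $A$ which meets $BC$ in the four points $D$, $D_3$, $D_2$, and $H_a$, respectively; the very same pencil meets the line $QD$ in the points $D$, $X := AP' \cdot QD$, $Q$, and the point at infinity $\omega$ on $QD$ (the last because $AH$ is parallel to $QD$, by the definition of $H$ in Definition 3.1). Since projection from the vertex $A$ preserves cross-ratio, the relation $\sf{H}(\it{DD_3, D_2 H_a})$ on $BC$ is equivalent to the relation $(D, X; Q, \omega) = -1$ on $QD$, and since $\omega$ is at infinity, this last condition is exactly the statement that $Q$ is the midpoint of the segment $DX$.

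The second step, the one doing all the real work, is to show that $Q$ is indeed the midpoint of $DX$. By Theorem 2.1 and Corollary 2.2, the line $D_0 Q$ passes through $M_d$, the midpoint of $AD$, and is parallel to $AP' = AD_3$. In the triangle $ADD_3$, the points $M_d$ and $D_0$ are thus the midpoints of the two sides emanating from $D$, so $M_d D_0$ is the midline parallel to the opposite side $AD_3$. A standard consequence of the midline theorem (proved by a single pair of similar triangles) is that this midline bisects every segment joining $D$ to a point on $AD_3$. Applied to the segment $DX$, whose endpoint $X$ lies on $AD_3$ by definition, this shows that the intersection of $M_d D_0$ with the line $QD$ must be the midpoint of $DX$; but that intersection is $Q$ itself, since $Q$ lies on both lines, so $Q$ is the midpoint of $DX$.

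Combining the two steps establishes the harmonic relation on side $BC$, and the arguments for the sides $CA$ and $AB$ are completely analogous, using the pencils through $B$ and $C$ in place of the one through $A$. The one delicate point to check is that the lines $QD$ and $M_d D_0$ do not coincide, for otherwise the intersection point $Q$ would not be singled out. If they did coincide, their common line would contain both $D$ and $D_0$ and hence equal $BC$, which would place $M_d$ and therefore $A$ on $BC$, contradicting that $ABC$ is a genuine triangle. Hence the argument is free of degeneracy in every non-trivial case (i.e.\ whenever $D \ne D_3$).
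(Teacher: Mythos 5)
Your proof is correct and is essentially the paper's proof: both arguments rest on the same key fact, namely that $Q$ is the midpoint of the segment joining $D$ to the point $X = AD_3 \cdot QD$ (the paper's $T$), established by the same midline argument from $QD_0 \parallel AP' = AD_3$ (Corollary 2.2) together with $D_0$ being the midpoint of $DD_3$. The only divergence is the choice of perspectivity: you project $D, D_3, D_2, H_a$ from $A$ onto the line $QD$, which sends $H_a$ to infinity and reduces the harmonic relation directly to that midpoint fact, whereas the paper projects from $Q$ onto the line $AH_a$, which sends $D$ to infinity and therefore needs the additional step of transferring the midpoint property from $DT$ to $AH_a$ via the similar triangles $DTD_3$ and $H_aAD_3$; your choice renders that second step unnecessary.
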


\begin{proof}
Define the points $M=AH_a \cdot QD_3$ and $T=DQ \cdot AD_3$.  By Theorem 1.1, $QD_0 || AP'=AD_3$, so since $D_0$ is the midpoint of $DD_3$, it follows by considering triangle $DTD_3$ that $Q$ is the midpoint of $DT$.  By definition of $H$ we also have $DQ || AH_a$, so using similar triangles $DTD_3$ and $H_aAD_3$, we see that $M$ is the midpoint of $AH_a$.  Now project the points $H_aDD_2D_3$ on $BC$ from $Q$ to the points $H_aJ_\infty AM$ on $AH_a$, where $J_\infty=AH_a \cdot QD$ is on the line at infinity.  Then the relation $\sf{H}(\it{J_\infty M,AH_a})$ yields $\sf{H}(\it{DD_3,D_2H_a})$.
\end{proof}

In \cite{mm3} we will explore the properties of the points $O$ and $H$ in greater depth.  In order to give an example of the points $O$ and $H$, we give their barycentric coordinates in terms of the barycentric coordinates of the point $P=(x,y,z)$.  We note that
$$Q=(x',y',z')=(x(y+z),y(x+z),z(x+y)),$$
(see \cite{gr1}, \cite{mor}) while
$$K=\left(\begin{array}{ccc} 0 & 1 &1 \\1 & 0 & 1 \\1 & 1 & 0\end{array}\right), \ \ T_{P'}^{-1}=\left(\begin{array}{ccc} -xx' & yx' & zx' \\ xy' & -yy' & zy' \\ xz' & yz' & -zz' \end{array}\right).$$
From this and Theorem 3.2 we find that the barycentric coordinates of $O$ and $H$ are
\begin{eqnarray*}
O&=&(x(y+z)^2x'', y(x+z)^2y'', z(x+y)^2z''), \\
H &=&(xy''z'',yx''z'',zx''y'')=\left(\frac{x}{x''},\frac{y}{y''}, \frac{z}{z''}\right),
\end{eqnarray*}
where
$$x''=xy+xz+yz-x^2, \ \  y''=xy+xz+yz-y^2, \ \ z''=xy+xz+yz-z^2.$$
For example, using the coordinates of $O$ and $H$ it can be shown that if $P=Na$ is the Nagel point, then
\begin{eqnarray*}
O&=&(g(a,b,c), g(b,c,a), g(c,a,b))=X(6600),\\
&&\textrm{with} \ \ g(a,b,c)=a^2(b+c-a)(a^2+b^2+c^2-2ab-2ac),\\
H &=&(h(a,b,c),h(b,c,a),h(c,a,b))=X(6601),\\
&&\textrm{with} \ \ h(a,b,c)=(b+c-a)/(a^2+b^2+c^2-2ab-2ac).
\end{eqnarray*}
See \cite{ki}, \cite{mm}.  Here, $Na=(b+c-a, c+a-b,a+b-c)$, where $a,b,c$ are the side lengths of $ABC$.  (See \cite{mm}, where these points were given before being listed in \cite{ki}.)  Note that $H=\gamma(X(1617))$, where $\gamma$ is isogonal conjugation, and $X(1617)$ is the TCC-perspector of $X(57)=\gamma(X(9))=\gamma(Q)$.  We will generalize this fact in \cite{mm4}, by showing (synthetically) in general that $\gamma(H)$ is the TCC-perspector of $\gamma(Q)$.

\medskip

\medskip

\noindent Dept. of Mathematics, Maloney Hall\\
Boston College\\
140 Commonwealth Ave., Chestnut Hill, Massachusetts, 02467-3806\\
{\it e-mail}: igor.minevich@bc.edu
\bigskip

\noindent Dept. of Mathematical Sciences\\
Indiana University - Purdue University at Indianapolis (IUPUI)\\
402 N. Blackford St., Indianapolis, Indiana, 46202\\
{\it e-mail}: pmorton@math.iupui.edu

\end{document}